\title{Tait coloring and a moduli space}
\author{Zipei Zhuang}
\newtheorem{definition}{Definition}%
\newtheorem{theorem}{Theorem}%
\newtheorem{lemma}{Lemma}%
\newtheorem{question}{Question}%
\newtheorem{conjecture}{Conjecture}
\begin{document}
	
	\maketitle
	
	\begin{abstract}
	We associate a moduli space $\mathcal{M}(G)$ to a planar trivalent graph $G$. We proved several decomposition properties of $\mathcal{M}(G)$, which implies that the Euler characteristic of $\mathcal{M}(G)$ equals to the number of Tait colorings of $G$ when $G$ is bipartite. Then we interpret  $\mathcal{M}(G)$ as a representation space of the fundamental group of $G$ to $SU(3)$.
	\end{abstract}
	
	\section{Introduction}
	
	Let $G$ be a trivalent graph. A Tait coloring of $G$ is a function from the edges of $G$ to a 3-element set of "colors" \{1,2,3\} such that edges of 3 different colors are incident at each vertex. The four-color theorem is equivalent to the existence of Tait colorings for bridgeless planar trivalent graphs.
	
	There is a similar concept in knot theory. A \textbf{web} is an oriented trivalent planar graph, such that at each vertex the edges are either all oriented in or oriented out. In particular, such graphs are always bipartite. The invariant $P_3(\Gamma)$ (\cite{MR1403861}\cite{MR1659228}) of a web $\Gamma$, which is used to construct the $sl_3$ knot polynomial invariant, is characterized by the properties illustrated in Fig. \ref{graph0}  ,  where $[n]=\frac{q^n -q^{-n}}{q-q^{-1}}$, since a closed bipartite graph contains either a loop, a digon face or a square face.
	
	Let $q=1$, we see that relations in Fig.\ref{graph0}  ensure that $P_3(\Gamma)(1)$ is the number $|Tait(\Gamma)|$ of Tait colorings of $\Gamma$. This coincidence raised people's interest on modifying concepts and methods devoloped for studying $P_n(\Gamma)$ in knot theory to study $|Tait(G)|$ for a general trivalent graph $G$.

	In \cite{MR2100691}, Khovanov defined a link homology theory using webs(oriented closed trivalent graphs) and foams(cobordisms between webs), whose Euler characteristic is the $sl_3$ link polynomial. Kronheimer and Mrowka(\cite{MR3880205}) defined an instanton homology $J^{\#}(G)$ for unoriented trivalent spatial graphs $G$, whose dimension is conjectured to be the number of Tait colorings of $G$. The non-vanishing theorem they proved would then, lead to a new proof of the four-color theorem. They also proposed a combinatorial counterpart to $J^{\#}$ for planar unoriented webs, by imitating the construction of Khovanov's $sl_3$ homology in \cite{MR2100691}.
	
	Motivated by the combinatorial construction in \cite{MR3880205}, Khovanov and Robert developed a homology theory $<G>$ for planar unoriented trivalent graphs $G$ using an unoriented version of the Robert-Wagner foam evaluation, whose rank is conjecturally related to $|Tait(G)|$. 
	
	Motivated by the perspective of a potential relationship between the $SU(N)$ knot instanton homology theory \cite{MR2860345} \cite{MR3966740} and Khovanov-Rozansky's $sl(N)$ knot homology, the authors of \cite{MR3190356} defined a moduli space $\mathcal{M}(\Gamma)$ for any web $\Gamma$, and showed that the Euler characteristic of $\mathcal{M}(\Gamma)$ equals the evaluation of $P_{N}(\Gamma)$ at 1.

	In this paper, we use a modified construction in \cite{MR3190356} to aassociate a moduli space $\mathcal{M}(G)$ to a planar trivalent graph $G$. 
	In Section 2, we defined and proved several decomposition properties of $\mathcal{M}(G)$, which shows that the Euler characteristic $\chi(\mathcal{M}(G))$ behaves the same as the number of Tait colorings $|Tait(G)|$ of $G$ under some decompositions. In particular, this shows that 
	\begin{equation}
		\chi(\mathcal{M}(G))=|Tait(G)|
	\end{equation}
for any bipartite graph $G$.

In Section 3, we showed that $\mathcal{M}(G)$ is homeomorphic to the representation space $R_\Phi(\pi_1(G) ; SU(3))$, which is similar to the representation spaces appearing in Kronheimer and Mrowka's instanton homology  \cite{MR3880205}. \cite{MR3966740}.

\begin{figure}[t]
	\def\svgwidth{\columnwidth}
\begingroup%
  \makeatletter%
  \providecommand\color[2][]{%
    \errmessage{(Inkscape) Color is used for the text in Inkscape, but the package 'color.sty' is not loaded}%
    \renewcommand\color[2][]{}%
  }%
  \providecommand\transparent[1]{%
    \errmessage{(Inkscape) Transparency is used (non-zero) for the text in Inkscape, but the package 'transparent.sty' is not loaded}%
    \renewcommand\transparent[1]{}%
  }%
  \providecommand\rotatebox[2]{#2}%
  \newcommand*\fsize{\dimexpr\f@size pt\relax}%
  \newcommand*\lineheight[1]{\fontsize{\fsize}{#1\fsize}\selectfont}%
  \ifx\svgwidth\undefined%
    \setlength{\unitlength}{419.52755906bp}%
    \ifx\svgscale\undefined%
      \relax%
    \else%
      \setlength{\unitlength}{\unitlength * \real{\svgscale}}%
    \fi%
  \else%
    \setlength{\unitlength}{\svgwidth}%
  \fi%
  \global\let\svgwidth\undefined%
  \global\let\svgscale\undefined%
  \makeatother%
  \begin{picture}(1,0.70945946)%
    \lineheight{1}%
    \setlength\tabcolsep{0pt}%
    \put(0,0){\includegraphics[width=\unitlength,page=1]{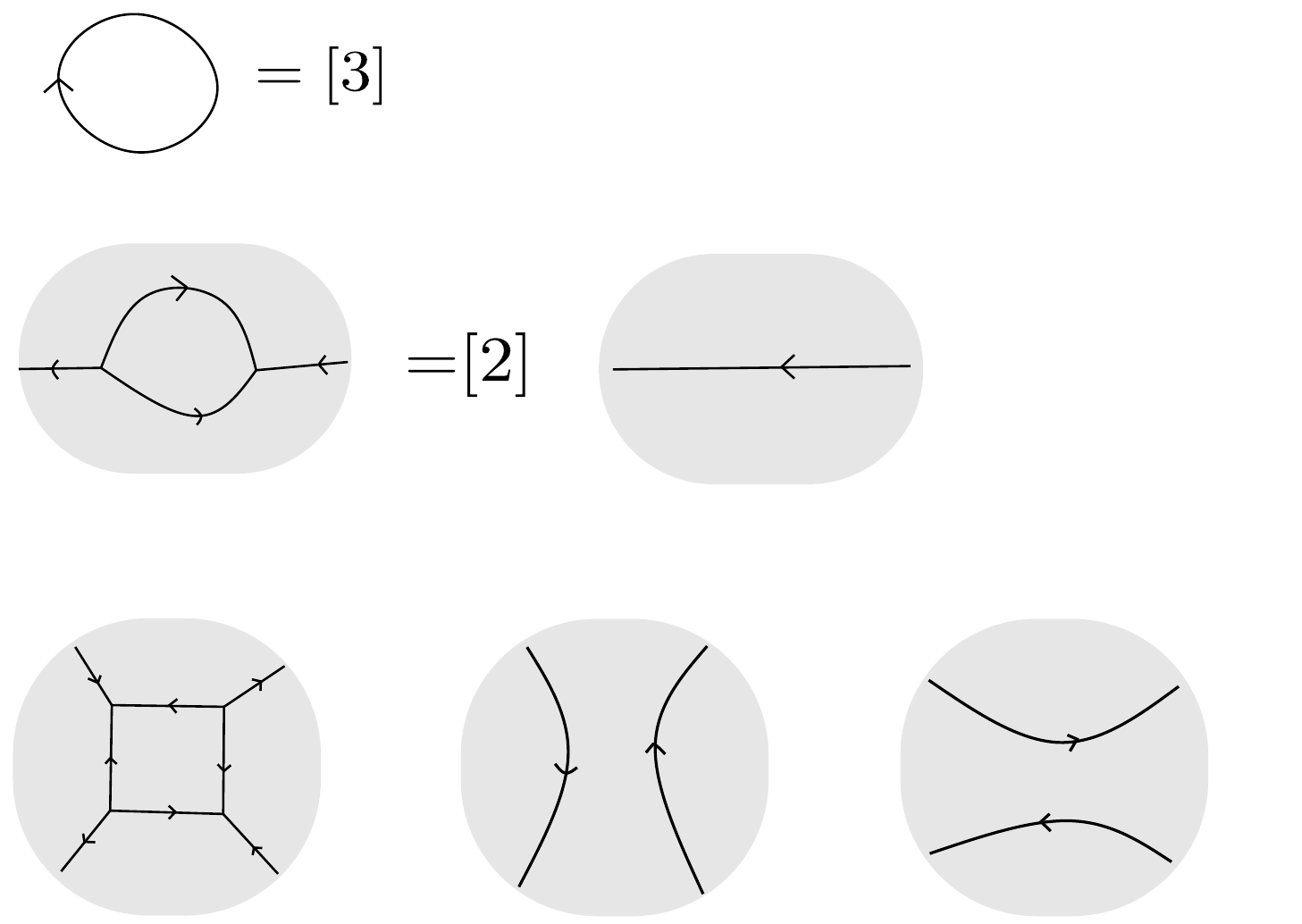}}%
    \put(0.27900773,0.10087951){\color[rgb]{0,0,0}\makebox(0,0)[lt]{\lineheight{1.25}\smash{\begin{tabular}[t]{l}=\end{tabular}}}}%
    \put(0.61760928,0.09636482){\color[rgb]{0,0,0}\makebox(0,0)[lt]{\lineheight{1.25}\smash{\begin{tabular}[t]{l}+\end{tabular}}}}%
  \end{picture}%
\endgroup%

	\caption{Graph relations that determine $P_3(\Gamma)$} 
	\label{graph0}
\end{figure}

\section{Definitions and Relations}

Let $G$ be a planar trivalent graph, $ m:=|E(G)|$ be the number of edges of $G$.

For each edge $e \in E(G)$, we decorate it by a point $D(e) \in \mathbb{CP}^2$, i.e. a line in $\mathbb{C}^3$ that passes through the origin point. We call this decoration \textbf{admissible}, if at any vertex $v$, the decorations $D(e_i)$ associated to the three edges $e_1,e_2,e_3$ adjacent to $v$ are orthogonal pairwise in $\mathbb{C}^3$.

\begin{definition}
	The set of all admissible decorations of $G$ forms a moduli space which we denote by $\mathcal{M}(G)$. More precisely, a decoration can be viewed as a point in $\oplus_{e \in E(G)} \mathbb{P}^2$, and the structure of $\mathcal{M}(G)$ is induced as a subvariety of $\oplus_{e \in E(G)} \mathbb{P}^2$.
\end{definition}

The main result of this section is:
\begin{theorem} \label{thm1}
	For a bipartite trivalent graph $G$ we have 
	\begin{equation}
		\chi(\mathcal{M}(G))=|Tait(G)|
	\end{equation}
i.e. the Euler characteristic of $\mathcal{M}(G)$ equals to the number of Tait colorings of $G$.
\end{theorem}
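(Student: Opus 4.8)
The plan is to show that $\chi(\mathcal{M}(-))$ obeys exactly the local relations that the Kuperberg $sl_3$ spider imposes on $P_3$ at $q=1$, and that these same relations already determine $|Tait(-)|$ on bipartite planar trivalent graphs. The main tool is that the (compactly supported) Euler characteristic of a complex variety is motivic: it is additive over decompositions into locally closed subvarieties, $\chi(X)=\chi(U)+\chi(X\setminus U)$ for $U\subseteq X$ open, and multiplicative over Zariski-locally-trivial fibrations, $\chi(E)=\chi(B)\,\chi(F)$. I would use these two principles to convert each web relation in Fig.~\ref{graph0} into a statement about $\mathcal{M}$, and then run the same reduction on both sides.

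First I would record the atomic evaluations. A free circle carries a single unconstrained line, so $\mathcal{M}(\bigcirc)=\mathbb{CP}^2$ and $\chi=3$; and $\mathcal{M}(G_1\sqcup G_2)=\mathcal{M}(G_1)\times\mathcal{M}(G_2)$ gives multiplicativity under disjoint union. For a bigon with internal edges $a,b$ and external edges $c,d$, the orthogonality conditions at the two vertices force $D(c)=D(d)=(D(a)\oplus D(b))^{\perp}$; fixing this common line $L$, the pair $(D(a),D(b))$ is a full flag of the plane $L^{\perp}\cong\mathbb{C}^2$, i.e. a copy of $\mathbb{CP}^1$. Hence the forgetful map to the graph $G'$ obtained by collapsing the bigon to a single edge is a $\mathbb{CP}^1$-bundle and $\chi(\mathcal{M}(G))=2\,\chi(\mathcal{M}(G'))$. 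These match the circle value $[3]|_{q=1}=3$ and the bigon factor $[2]|_{q=1}=2$.

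The crux is the square relation. Around a square face with external legs $b_1,\dots,b_4$ I must produce a decomposition of $\mathcal{M}(G)$ whose Euler characteristic splits as $\chi(\mathcal{M}(G_A))+\chi(\mathcal{M}(G_B))$, where $G_A,G_B$ are the two planar resolutions of the square. The intended mechanism is to stratify the configuration of the four internal lines by a degeneracy condition (whether a distinguished pair of lines determined by the surrounding frames is orthogonal, equivalently whether a certain bilinear pairing vanishes): on the open stratum the square can be opened along one resolution and on the closed stratum along the other, so additivity of $\chi$ over this locally closed decomposition yields the ``$+$'' in Fig.~\ref{graph0}. Identifying these strata precisely, computing their Euler characteristics, and checking that the residual overlaps contribute zero is where the bipartite hypothesis is used, namely that every internal cycle is even so that each vertex is a genuine orthogonal decomposition of $\mathbb{C}^3$ with a consistent in/out structure. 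This is the step I expect to be the main obstacle, and it is also where the construction of \cite{MR3190356} must be adapted most carefully.

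Finally I would assemble the recursion. Since $G$ is bipartite and planar trivalent, every face has even length, and Euler's formula ($E=\tfrac32 V$, $F=\tfrac12 V+2$, so the face degrees sum to $3V<6F$) forces a face of length at most $5$, hence a bigon or a square. Inducting on the number of vertices, each move strictly simplifies $G$ while preserving planarity, trivalence, and bipartiteness, and the process terminates at disjoint unions of circles. Since the observation in the introduction shows that $|Tait(-)|$ satisfies the very same relations (the $q=1$ specialization of Fig.~\ref{graph0}), both invariants obey one recursion with identical base values and therefore agree, which proves $\chi(\mathcal{M}(G))=|Tait(G)|$ as claimed in Theorem~\ref{thm1}.
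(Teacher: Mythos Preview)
Your overall architecture---establish the circle, disjoint-union, bigon, and square relations for $\chi(\mathcal{M}(-))$, then induct using the fact that a planar bipartite trivalent graph always has a face of length $\le 4$---is exactly the paper's strategy. The paper also records a triangle relation (Lemma~\ref{l3}), but, as you implicitly note, it is not needed in the bipartite case.

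Where your proposal has a genuine gap is the square step, and your description of it is off in two respects. First, the relevant degeneracy is a \emph{coincidence} of lines, not an orthogonality. With the internal square edges $w,x,y,z$ (cyclically) and external legs $a,b,c,d$ as in Fig.~\ref{graph3}, pairwise orthogonality forces: if $x\neq z$ then $w=y$ and $a=d,\ b=c$; if $w\neq y$ then $x=z$ and $a=b,\ c=d$; and both pairs coincide exactly when $a=b=c=d$. Second, the decomposition is not ``one resolution on the open stratum, the other on the closed stratum.'' Rather, off the diagonal $\Delta=\{a=b=c=d\}$ the space $\mathcal{M}(G)$ is the \emph{disjoint union} of $\mathcal{M}(G')\setminus V_1$ and $\mathcal{M}(G'')\setminus V_2$ (one piece for each resolution), while over $\Delta$ the fibre is a $\mathbb{CP}^1$ sitting above $V_1\cong V_2$. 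Your motivic additivity then gives
\[
\chi(\mathcal{M}(G))=\bigl(\chi(\mathcal{M}(G'))-\chi(V_1)\bigr)+\bigl(\chi(\mathcal{M}(G''))-\chi(V_2)\bigr)+2\chi(V_1)=\chi(\mathcal{M}(G'))+\chi(\mathcal{M}(G'')).
\]
The paper obtains the same identity via a Mayer--Vietoris argument using an open neighbourhood of $\Delta$; once the stratification is identified correctly, your additivity route is a legitimate and slightly cleaner alternative.

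Finally, your attribution of the bipartite hypothesis is misplaced. The relations in Lemmas~\ref{l1}--\ref{l4} hold for \emph{any} planar trivalent graph; the orthogonal decomposition of $\mathbb{C}^3$ at each vertex is already built into the definition of an admissible decoration and needs no in/out structure. Bipartiteness is used only where you invoke it in your final paragraph: to guarantee that some face has length $2$ or $4$, so that the induction terminates.
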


\begin{question}
	Is Theorem \ref{thm1}   true for any planar trivalent graph?
\end{question}

Motivated by the conjecture in \cite{MR3190356} we propose:

\begin{conjecture}
	For any planar trivalent graph $G$ and for $i$ odd, we have $H_i(\mathcal{M}(G))=0$.
\end{conjecture}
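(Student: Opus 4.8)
The plan is to realize the conjecture as an instance of the classical fact that a compact symplectic manifold carrying a Hamiltonian torus action with isolated fixed points has no odd homology. First I would exhibit the torus action. The diagonal maximal torus $T\subset SU(3)$ acts on $\mathbb{C}^3$ preserving the Hermitian form, hence acts on each $\mathbb{CP}^2$ factor and diagonally on $\bigoplus_{e\in E(G)}\mathbb{CP}^2$; since it preserves orthogonality, it restricts to an action on $\mathcal{M}(G)$. A point $[v]\in\mathbb{CP}^2$ is $T$-fixed precisely when it is one of the three coordinate lines, so a $T$-fixed admissible decoration assigns a coordinate line to every edge with the three edges at each vertex receiving distinct coordinate lines -- that is, a Tait coloring. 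Thus $\mathcal{M}(G)^T\cong Tait(G)$ is finite, which is both consistent with Theorem~\ref{thm1} and the starting input for the Morse-theoretic argument.

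Next I would equip $\mathcal{M}(G)$ with a symplectic structure for which this action is Hamiltonian. The local model at a vertex is the variety of ordered pairwise-orthogonal triples of lines in $\mathbb{C}^3$, i.e. the complete flag manifold $\mathrm{Fl}(\mathbb{C}^3)=SU(3)/T$, a coadjoint orbit with its Kostant--Kirillov form and a Hamiltonian $T$-action. Globally, $\mathcal{M}(G)$ is the fiber product of the vertex flag manifolds glued along the edge-$\mathbb{CP}^2$'s by the requirement that the two ends of each edge decorate it by the same line. I would produce the symplectic form either (i) by presenting this fiber product as a reduction/fusion of $\prod_v \mathrm{Fl}(\mathbb{C}^3)$ in the quasi-Hamiltonian (group-valued moment map) framework, or (ii) through the homeomorphism $\mathcal{M}(G)\cong R_\Phi(\pi_1(G);SU(3))$ of Section~3, using the Atiyah--Bott--Goldman symplectic structure on the moduli of flat $SU(3)$-connections on the surface obtained by thickening the planar graph, with boundary holonomies constrained to the conjugacy classes recorded by $\Phi$.

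Granting a smooth symplectic $\mathcal{M}(G)$ with Hamiltonian $T$-action, the argument concludes as follows. Choose a generic circle $S^1\subset T$; its fixed locus equals $\mathcal{M}(G)^T=Tait(G)$, and the corresponding component $\mu$ of the moment map is a Morse--Bott -- indeed, since the fixed points are isolated, a Morse -- function with critical set $Tait(G)$. At each fixed point the tangent space is a symplectic $S^1$-representation, so it splits into two-real-dimensional weight spaces; the negative part is therefore even-dimensional and every Morse index is even. A perfect Morse function with only even-index critical points yields a CW structure with cells in even degrees only, whence $H_i(\mathcal{M}(G))=0$ for $i$ odd while $H_{\mathrm{even}}$ is free of total rank $|Tait(G)|$, simultaneously recovering and strengthening Theorem~\ref{thm1}.

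The main obstacle is establishing the hypotheses of this scheme for an arbitrary planar trivalent graph, and this is precisely the point at which the bipartite restriction of Theorem~\ref{thm1} was essential. The orthogonality gluing along an edge matches a holomorphic projection $\mathrm{Fl}(\mathbb{C}^3)\to\mathbb{CP}^2$ (remembering the smallest subspace) from one vertex with a non-holomorphic projection from the other, so the complex structures on the vertex flag factors do not glue, and $\mathcal{M}(G)\subset\bigoplus_e\mathbb{CP}^2$, being cut out by real Hermitian equations, is not visibly a complex or symplectic submanifold. Two things must be controlled: smoothness of the fiber product (nondegeneracy of the defining equations, which at the Tait-coloring fixed points is automatic because distinct coordinate lines are mutually transverse, but may fail elsewhere), and the existence of a global symplectic form compatible with the $T$-action. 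For bipartite $G$ the decomposition properties of Section~2 reduce everything to base cases where smoothness and the symplectic structure are transparent; for non-bipartite $G$ those reductions need not terminate (odd faces such as pentagons are not resolved by the digon and square moves), so one cannot induct down to a base case and must instead verify smoothness and symplecticity intrinsically. I expect this intrinsic verification -- or, equivalently, the explicit identification of the quasi-Hamiltonian structure in case (i) -- to be the crux.
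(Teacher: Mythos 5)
First, a point of order: the paper does not prove this statement---it is posed precisely as a conjecture (modeled on the analogous conjecture in \cite{MR3190356}) and is left open, so there is no proof of record to compare yours against; what follows is an assessment of your strategy on its own terms. The genuinely solid part of your proposal is the torus action and its fixed-point computation: the diagonal $T\subset SU(3)$ does act on $\mathcal{M}(G)$, and a $T$-fixed admissible decoration assigns coordinate lines, distinct at each vertex, so $\mathcal{M}(G)^{T}\cong Tait(G)$. Note that this observation already pays a dividend independent of everything symplectic: since $\mathcal{M}(G)$ is a compact real algebraic variety with an algebraic circle action (take a generic circle in $T$; only finitely many weights occur, so its fixed set equals $\mathcal{M}(G)^T$), the classical fixed-point identity $\chi(X)=\chi(X^{S^1})$ gives $\chi(\mathcal{M}(G))=|Tait(G)|$ for \emph{every} trivalent graph, with no bipartite hypothesis and no induction---answering the Question following Theorem~\ref{thm1} and recovering Theorem~\ref{thm1} itself. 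You should isolate that as a standalone result.

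The odd-vanishing conclusion, however, has two gaps that your sketch does not close, and one claim that is wrong as stated. (a) Smoothness of $\mathcal{M}(G)$ is not merely unverified but doubtful: $\mathcal{M}(G)$ is cut out of $\prod_e\mathbb{CP}^2$ by \emph{real} Hermitian-orthogonality equations, and the paper's own proof of the square relation (Lemma~\ref{l4}) exhibits exactly the branching one fears---two codimension-zero families ($a=b\neq c=d$ and $a=d\neq b=c$, each with interior decorations uniquely determined) meeting along the locus $a=b=c=d$, over which an extra $\mathbb{CP}^1$ of decorations appears. Unless this is shown to assemble into a manifold (a blow-up-like picture is conceivable but must be proved), the Frankel--Atiyah argument ``moment map component is a Morse function with even indices'' has no foothold, and on a singular space the conjecture could fail for your mechanism while still being true. (b) No symplectic form is actually produced, and both of your candidate routes have a structural mismatch: the Atiyah--Bott--Goldman form lives on \emph{character} varieties (conjugation quotients) of \emph{surface} groups, whereas $R_\Phi(\pi_1(G);SU(3))$ is an unquotiented representation space of a graph-complement group; the quasi-Hamiltonian fusion of $\prod_v SU(3)/T$ likewise yields a symplectic structure only after reduction by the conjugation action, which the paper's space does not perform. (c) Your closing claim that for bipartite $G$ ``the decomposition properties of Section~2 reduce everything to base cases where smoothness and the symplectic structure are transparent'' is incorrect: the square relation is only an Euler-characteristic identity obtained by cut-and-paste and Mayer--Vietoris, not a homeomorphism or bundle statement, and square faces are unavoidable for bipartite graphs (e.g.\ the cube), so smoothness and symplecticity are exactly as opaque in the bipartite case as in general. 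In short: keep the torus action and the fixed-point theorem as a theorem about $\chi$, but the route to $H_{odd}=0$ requires first settling whether $\mathcal{M}(G)$ is smooth, and that is where the real work lies.
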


The conjecture toghther with Theorem     would together imply that $ dim H(\mathcal{M}(G))= |Tait(G)|$ for any bipartite  $G$.

\begin{lemma} \label{l1}
	1. Let $U$ be the graph with a single circle. Then $\mathcal{M}(U)$ $\cong$ $\mathbb{P}^2$, therefore
	  $\chi(\mathcal{M}(U))=|Tait(U)| =3$.
	
	2. Let $G_1, G_2$ be two trivalent graph, $G_1 \cup G_2$ their disjoint union, then
	\begin{equation}
		\mathcal{M}(G_1 \cup G_2) = \mathcal{M}(G_1) \times \mathcal{M}(G_2)
	\end{equation}
In particular,
\begin{equation}
	\chi(\mathcal{M}(G_1 \cup G_2)) = \chi(\mathcal{M}(G_1)) \cdot \chi(\mathcal{M}(G_2))
\end{equation}
\end{lemma}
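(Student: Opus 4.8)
The common theme of both parts is that the admissibility condition defining $\mathcal{M}(G)$ is imposed purely \emph{vertex-by-vertex}: each relation is the pairwise orthogonality of the three decorations meeting at a single vertex, and so depends only on the edges incident to that vertex. For part 1, I would first observe that the circle $U$ consists of a single edge $e$ and has no trivalent vertices. Consequently there are no orthogonality relations at all, and $\mathcal{M}(U)$ is the entire decoration space of the lone edge, namely $\mathbb{CP}^2 = \mathbb{P}^2$. It then remains to record two standard computations: $\chi(\mathbb{CP}^2)=3$, read off from the CW decomposition with exactly one cell in each even dimension $0,2,4$; and $|Tait(U)|=3$, since a Tait coloring of $U$ is merely a choice of one of three colors for the single edge, subject to no incidence constraint.

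For part 2, I would begin from the ambient identification
\begin{equation}
\bigoplus_{e \in E(G_1 \cup G_2)} \mathbb{P}^2 \;=\; \Big(\bigoplus_{e \in E(G_1)} \mathbb{P}^2\Big) \times \Big(\bigoplus_{e \in E(G_2)} \mathbb{P}^2\Big),
\end{equation}
which holds because $E(G_1 \cup G_2) = E(G_1) \sqcup E(G_2)$. The crucial point is that $V(G_1 \cup G_2) = V(G_1) \sqcup V(G_2)$ and that every edge incident to a vertex of $G_i$ again lies in $G_i$; hence each orthogonality relation only constrains coordinates belonging to the corresponding factor. The defining relations of $\mathcal{M}(G_1 \cup G_2)$ therefore split as the relations of $\mathcal{M}(G_1)$ on the first factor together with those of $\mathcal{M}(G_2)$ on the second, with no relation mixing the two. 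This exhibits $\mathcal{M}(G_1 \cup G_2)$ as exactly the product subvariety $\mathcal{M}(G_1)\times\mathcal{M}(G_2)$ inside the product ambient space, which is the first displayed identity.

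The Euler characteristic formula then follows from multiplicativity of the topological Euler characteristic under products. Since $\mathcal{M}(G)$ is a closed (real-algebraic) subvariety of a product of projective planes, it is compact and of finite CW type, so the K\"unneth theorem yields $\chi(X\times Y)=\chi(X)\,\chi(Y)$ and hence the claim. I do not anticipate a genuine obstacle in either part: the entire content is the locality of the admissibility condition together with these standard facts. The only point deserving care is to check that the decoupling of the defining equations produces an isomorphism of moduli spaces with their induced structure, rather than a mere bijection of underlying point sets; this is immediate once the equations are seen not to mix the two factors.
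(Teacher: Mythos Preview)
Your proposal is correct and follows exactly the approach the paper takes: the paper's own proof is the single sentence ``This is immediate from the definition,'' and your argument simply unpacks that immediacy by spelling out the locality of the admissibility condition and the standard facts $\chi(\mathbb{CP}^2)=3$ and $\chi(X\times Y)=\chi(X)\chi(Y)$.
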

\begin{proof}
	This is immediate from the definition.
\end{proof}

\begin{figure}[t]
	\def\svgwidth{\columnwidth}
\begingroup%
  \makeatletter%
  \providecommand\color[2][]{%
    \errmessage{(Inkscape) Color is used for the text in Inkscape, but the package 'color.sty' is not loaded}%
    \renewcommand\color[2][]{}%
  }%
  \providecommand\transparent[1]{%
    \errmessage{(Inkscape) Transparency is used (non-zero) for the text in Inkscape, but the package 'transparent.sty' is not loaded}%
    \renewcommand\transparent[1]{}%
  }%
  \providecommand\rotatebox[2]{#2}%
  \newcommand*\fsize{\dimexpr\f@size pt\relax}%
  \newcommand*\lineheight[1]{\fontsize{\fsize}{#1\fsize}\selectfont}%
  \ifx\svgwidth\undefined%
    \setlength{\unitlength}{419.52755906bp}%
    \ifx\svgscale\undefined%
      \relax%
    \else%
      \setlength{\unitlength}{\unitlength * \real{\svgscale}}%
    \fi%
  \else%
    \setlength{\unitlength}{\svgwidth}%
  \fi%
  \global\let\svgwidth\undefined%
  \global\let\svgscale\undefined%
  \makeatother%
  \begin{picture}(1,0.70945946)%
    \lineheight{1}%
    \setlength\tabcolsep{0pt}%
    \put(0,0){\includegraphics[width=\unitlength,page=1]{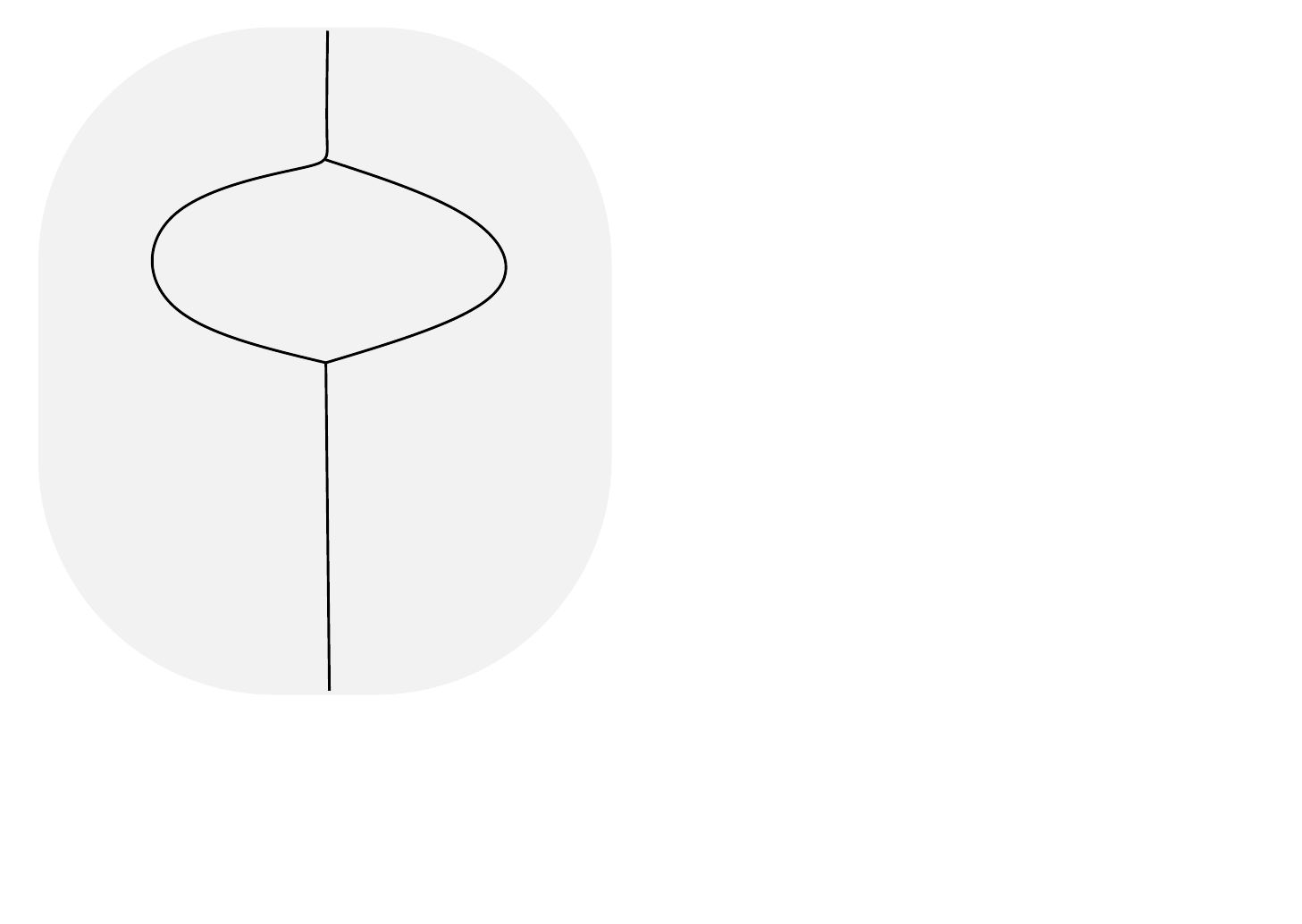}}%
    \put(0.27712964,0.62676942){\color[rgb]{0,0,0}\makebox(0,0)[lt]{\lineheight{1.25}\smash{\begin{tabular}[t]{l}a\\\end{tabular}}}}%
    \put(0.41248875,0.49764326){\color[rgb]{0,0,0}\makebox(0,0)[lt]{\lineheight{1.25}\smash{\begin{tabular}[t]{l}d\\\end{tabular}}}}%
    \put(0.06590582,0.49279554){\color[rgb]{0,0,0}\makebox(0,0)[lt]{\lineheight{1.25}\smash{\begin{tabular}[t]{l}c\end{tabular}}}}%
    \put(0.28894449,0.29738717){\color[rgb]{0,0,0}\makebox(0,0)[lt]{\lineheight{1.25}\smash{\begin{tabular}[t]{l}b\end{tabular}}}}%
    \put(0,0){\includegraphics[width=\unitlength,page=2]{graph1.pdf}}%
    \put(0.22474261,0.08630948){\color[rgb]{0,0,0}\makebox(0,0)[lt]{\lineheight{1.25}\smash{\begin{tabular}[t]{l}G\end{tabular}}}}%
    \put(0.72402872,0.07737088){\color[rgb]{0,0,0}\makebox(0,0)[lt]{\lineheight{1.25}\smash{\begin{tabular}[t]{l}G'\end{tabular}}}}%
  \end{picture}%
\endgroup%

	\caption{The bigon move} 
	\label{graph1}
\end{figure}

\begin{lemma}(The bigon relation) \label{l2}
	
	Let $G$ be a trivalent graph containing a bigon, and $G'$ the graph obtained from $G$ by collapsing the bigon(see Figure.\ref{graph1}).Then $\mathcal{M}(G)$ is a $\mathbb{CP}^1$-bundle over $\mathcal{M}(G')$. In particular, 
	\begin{equation}
		\chi(\mathcal{M}(G))= \chi(\mathbb{CP}^1) \cdot \chi(\mathcal{M}(G'))=3\chi(\mathcal{M}(G'))
	\end{equation}
\end{lemma}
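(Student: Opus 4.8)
The plan is to exhibit an explicit projection $\pi\colon \mathcal{M}(G)\to\mathcal{M}(G')$ and to realize it as the projectivization of a rank-two Hermitian vector bundle over $\mathcal{M}(G')$, from which local triviality and the Euler characteristic formula follow by multiplicativity of $\chi$.

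First I would fix local notation following Figure \ref{graph1}. Let $v_1,v_2$ be the two trivalent vertices of the bigon, let $c,d$ be the two edges forming the bigon, and let $a$ (incident to $v_1$) and $b$ (incident to $v_2$) be the two external edges. Given an admissible decoration of $G$, admissibility at $v_1$ says that $D(c),D(d)$ are orthogonal lines lying in $D(a)^{\perp}$, and admissibility at $v_2$ says that $D(c),D(d)$ are orthogonal lines lying in $D(b)^{\perp}$. Since $D(c)$ and $D(d)$ are distinct orthogonal lines, they span a fixed $2$-plane in $\mathbb{C}^3$, and therefore $D(a)^{\perp}=\mathrm{span}\bigl(D(c),D(d)\bigr)=D(b)^{\perp}$. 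This forces $D(a)=D(b)$. Collapsing the bigon merges $a$ and $b$ into a single edge $e$ of $G'$, and the value $D(e):=D(a)=D(b)$ is a well-defined line; all remaining edges of $G'$ and all remaining admissibility conditions are left untouched, so $\pi$ is a well-defined map into $\mathcal{M}(G')$.

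Next I would identify the fiber and promote the construction to a bundle. Over a point of $\mathcal{M}(G')$ with $D(e)=A$, the fiber $\pi^{-1}$ consists of the ordered pairs $(D(c),D(d))$ of orthogonal lines spanning $A^{\perp}$; since $c$ and $d$ are distinct labeled edges, choosing $D(c)\in\mathbb{P}(A^{\perp})$ freely determines $D(d)=D(c)^{\perp}$ inside the $2$-dimensional space $A^{\perp}$, so the fiber is $\mathbb{P}(A^{\perp})\cong\mathbb{CP}^1$. To obtain genuine local triviality rather than merely a family with constant fiber, let $L\to\mathcal{M}(G')$ be the tautological line bundle pulled back from the $\mathbb{P}^2$-factor indexed by $e$; its orthogonal complement $L^{\perp}$ inside the trivial Hermitian bundle $\mathcal{M}(G')\times\mathbb{C}^3$ is a rank-two vector bundle, and the identification above gives a homeomorphism $\mathcal{M}(G)\cong\mathbb{P}(L^{\perp})$ of spaces over $\mathcal{M}(G')$. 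This exhibits $\mathcal{M}(G)$ as the $\mathbb{CP}^1$-bundle claimed. Finally, multiplicativity of the Euler characteristic for locally trivial fibrations with compact fiber yields $\chi(\mathcal{M}(G))=\chi(\mathbb{CP}^1)\cdot\chi(\mathcal{M}(G'))$.

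I expect the main obstacle to be the two geometric points that make the map both well-defined and locally trivial: the forcing $D(a)=D(b)$ (which is what lets the bigon be collapsed at all), and the upgrade from a constant-fiber family to an honest bundle. The cleanest way to handle the latter is precisely the projectivization $\mathbb{P}(L^{\perp})$, which produces the local trivializations automatically from local frames of $L^{\perp}$. One subtlety worth recording is that the bigon edges $c,d$ are labeled, so the fiber is the space of \emph{ordered} orthogonal pairs, giving $\mathbb{P}(A^{\perp})\cong\mathbb{CP}^1$ rather than an unordered symmetric quotient; this is consistent with the corresponding count for Tait colorings, where fixing the color of the merged edge leaves exactly the two orderings of the remaining two colors on $c$ and $d$.
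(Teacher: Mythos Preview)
Your proof is correct and follows the same route as the paper: both force $D(a)=D(b)$ from orthogonality to the distinct lines $D(c),D(d)$ and then parametrize the fiber by the free choice of $D(c)\in\mathbb{P}(D(a)^{\perp})\cong\mathbb{CP}^1$. Your explicit realization of $\mathcal{M}(G)$ as $\mathbb{P}(L^{\perp})$ supplies the local triviality that the paper's argument leaves implicit.
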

\begin{proof}
	Let $\mathcal{D}$ be a decoration of $G$ as illustrated in Figure.\ref{graph1}. We have $a=b$ since they are both orthogonal to two different lines $c,d$. For fixed $a$, we can choose $c$ to be any line orthogonal to $a$, and then $d$ is the unique line orthogonal to both $a$ and $c$. Therefore for each admissible decoration of $G'$, there corresonds to a $\mathbb{CP}^1$-set of admissible decorations of $G$.
\end{proof}

\begin{figure}[t]
	\def\svgwidth{\columnwidth}
\begingroup%
  \makeatletter%
  \providecommand\color[2][]{%
    \errmessage{(Inkscape) Color is used for the text in Inkscape, but the package 'color.sty' is not loaded}%
    \renewcommand\color[2][]{}%
  }%
  \providecommand\transparent[1]{%
    \errmessage{(Inkscape) Transparency is used (non-zero) for the text in Inkscape, but the package 'transparent.sty' is not loaded}%
    \renewcommand\transparent[1]{}%
  }%
  \providecommand\rotatebox[2]{#2}%
  \newcommand*\fsize{\dimexpr\f@size pt\relax}%
  \newcommand*\lineheight[1]{\fontsize{\fsize}{#1\fsize}\selectfont}%
  \ifx\svgwidth\undefined%
    \setlength{\unitlength}{419.52755906bp}%
    \ifx\svgscale\undefined%
      \relax%
    \else%
      \setlength{\unitlength}{\unitlength * \real{\svgscale}}%
    \fi%
  \else%
    \setlength{\unitlength}{\svgwidth}%
  \fi%
  \global\let\svgwidth\undefined%
  \global\let\svgscale\undefined%
  \makeatother%
  \begin{picture}(1,0.70945946)%
    \lineheight{1}%
    \setlength\tabcolsep{0pt}%
    \put(0,0){\includegraphics[width=\unitlength,page=1]{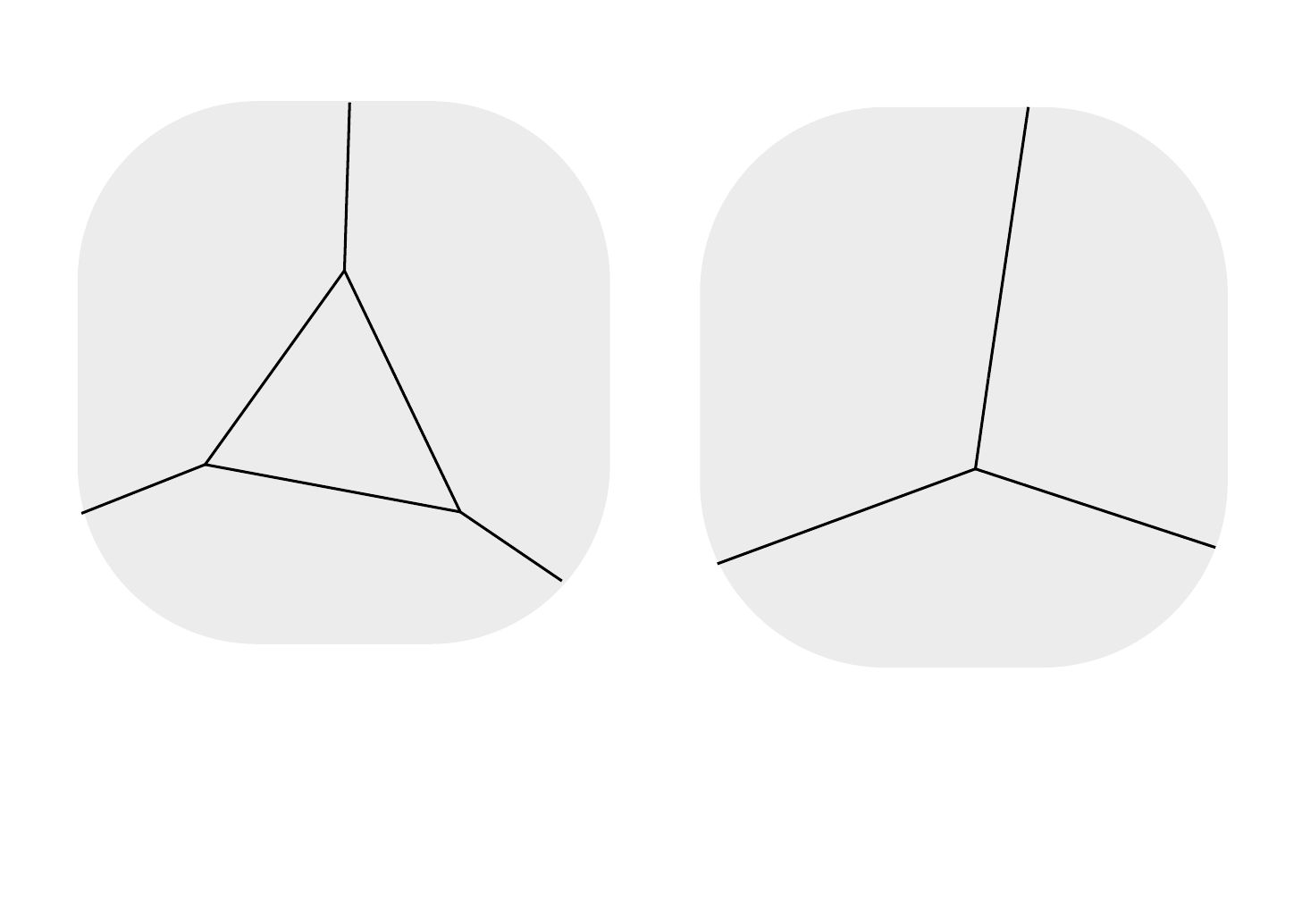}}%
    \put(0.28347374,0.55284044){\color[rgb]{0,0,0}\makebox(0,0)[lt]{\lineheight{1.25}\smash{\begin{tabular}[t]{l}a\end{tabular}}}}%
    \put(0.17057985,0.43580307){\color[rgb]{0,0,0}\makebox(0,0)[lt]{\lineheight{1.25}\smash{\begin{tabular}[t]{l}d\end{tabular}}}}%
    \put(0.32139067,0.42053229){\color[rgb]{0,0,0}\makebox(0,0)[lt]{\lineheight{1.25}\smash{\begin{tabular}[t]{l}e\end{tabular}}}}%
    \put(0.24055453,0.28527942){\color[rgb]{0,0,0}\makebox(0,0)[lt]{\lineheight{1.25}\smash{\begin{tabular}[t]{l}f\end{tabular}}}}%
    \put(0.09517235,0.34740804){\color[rgb]{0,0,0}\makebox(0,0)[lt]{\lineheight{1.25}\smash{\begin{tabular}[t]{l}b\end{tabular}}}}%
    \put(0.40012611,0.29697014){\color[rgb]{0,0,0}\makebox(0,0)[lt]{\lineheight{1.25}\smash{\begin{tabular}[t]{l}c\end{tabular}}}}%
    \put(0.19774333,0.0755973){\color[rgb]{0,0,0}\makebox(0,0)[lt]{\lineheight{1.25}\smash{\begin{tabular}[t]{l}G\end{tabular}}}}%
    \put(0.73408833,0.0765002){\color[rgb]{0,0,0}\makebox(0,0)[lt]{\lineheight{1.25}\smash{\begin{tabular}[t]{l}G'\end{tabular}}}}%
  \end{picture}%
\endgroup%

	\caption{The triangle move} 
	\label{graph2}
\end{figure}

\begin{lemma}(The triangle relation) \label{l3}
	Let $G$ be a trivalent graph containing 3 edges which form a triangle. Let $G'$ be obtained from $G$ by collapsing the triangle to a single vertex(see Figure. \ref{graph2} ). Then $\mathcal{M}(G) \cong \mathcal{M}(G')$. 
\end{lemma}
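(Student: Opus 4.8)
The plan is to work entirely locally on the triangle and show that the three pairwise-orthogonality conditions at its vertices force each of the three internal edges to carry exactly the same line as one of the three external legs. Once that is established, an admissible decoration of $G$ contains no information beyond an admissible decoration of $G'$, and conversely, so the two moduli spaces will be identified by a coordinate-forgetting morphism. Adopt the labels of Figure \ref{graph2}: write $v_1, v_2, v_3$ for the three vertices of the triangle, let $d, e, f$ be the internal edges and $a, b, c$ the external legs, arranged so that the edges incident to $v_1, v_2, v_3$ are $\{a, d, e\}$, $\{b, d, f\}$, $\{c, e, f\}$ respectively. Collapsing the triangle produces a single vertex of $G'$ where the legs $a, b, c$ meet; every edge of $G$ outside the triangle, and every constraint it imposes, is common to $G$ and $G'$.

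First I would read off the vertex conditions for an admissible decoration $\mathcal{D}$ of $G$. Admissibility at $v_1$ says $D(a), D(d), D(e)$ are pairwise orthogonal, so $D(d), D(e)$ are independent and span the two-dimensional space $D(a)^\perp \subset \mathbb{C}^3$. The edge $f$ satisfies $D(f) \perp D(d)$ (at $v_2$) and $D(f) \perp D(e)$ (at $v_3$); since $D(d), D(e)$ span $D(a)^\perp$ and $(D(a)^\perp)^\perp = D(a)$, this forces $D(f) = D(a)$. From $v_2$ we then get $D(b) \perp D(f) = D(a)$, so $D(b) \in D(a)^\perp$, together with $D(b) \perp D(d)$; because $\{D(d), D(e)\}$ is an orthogonal basis of $D(a)^\perp$, this gives $D(b) = D(e)$. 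Symmetrically, $D(c) \perp D(f) = D(a)$ and $D(c) \perp D(e)$ force $D(c) = D(d)$. In particular $D(a), D(b), D(c)$ are pairwise orthogonal, which is precisely the admissibility condition at the collapsed vertex of $G'$.

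With these identifications in hand I would define the comparison map $\mathcal{M}(G) \to \mathcal{M}(G')$ by discarding the decorations of $d, e, f$ and retaining $a, b, c$ together with all the data away from the triangle. The computation above shows the image lands in $\mathcal{M}(G')$, and the map is a morphism of varieties since it is a projection onto a subset of the ambient coordinates. Its inverse sets $D(d) := D(c)$, $D(e) := D(b)$, $D(f) := D(a)$; verifying admissibility of the reconstructed decoration at $v_1, v_2, v_3$ reduces in each case to the pairwise orthogonality of $D(a), D(b), D(c)$, which holds throughout $\mathcal{M}(G')$. Both maps are algebraic and mutually inverse, giving $\mathcal{M}(G) \cong \mathcal{M}(G')$.

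The substantive step is the chain of line-identifications $D(f) = D(a)$, $D(b) = D(e)$, $D(c) = D(d)$; everything afterward is formal. I expect the one point to watch is the dimension bookkeeping that makes these forced, namely that a line orthogonal to a spanning pair of vectors of $D(a)^\perp$ must coincide with $D(a)$ itself, and the confirmation that the forgetful correspondence is an isomorphism of varieties rather than a mere bijection of sets, which follows because both it and its inverse are given by coordinate-copying algebraic formulas.
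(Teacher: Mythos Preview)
Your argument is correct and follows the same local-orthogonality analysis as the paper's proof: both show that the triangle's internal decorations are uniquely determined by the external legs $a,b,c$, so forgetting $d,e,f$ gives a bijection between admissible decorations. Your version is more explicit than the paper's, actually pinning down the identifications $D(f)=D(a)$, $D(e)=D(b)$, $D(d)=D(c)$ and verifying the variety isomorphism via coordinate-copying maps, whereas the paper only argues that $a,b,c$ are pairwise distinct and that $d,e,f$ are then determined.
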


\begin{proof}
	Let $\mathcal{D}$ be an admissible decoration of $G$, as illustrated in Figure.\ref{graph2}. Since $e \neq f$, we have $a \neq b$, therefore $a,b,c$ are 3 different edges, and $d,e,f$ are unniquely determined by $a,b,c$. It follows that admissible decorarions of $G$ and $G'$ are 1-1 identified.
\end{proof}

\begin{figure}[t]
	\def\svgwidth{\columnwidth}
	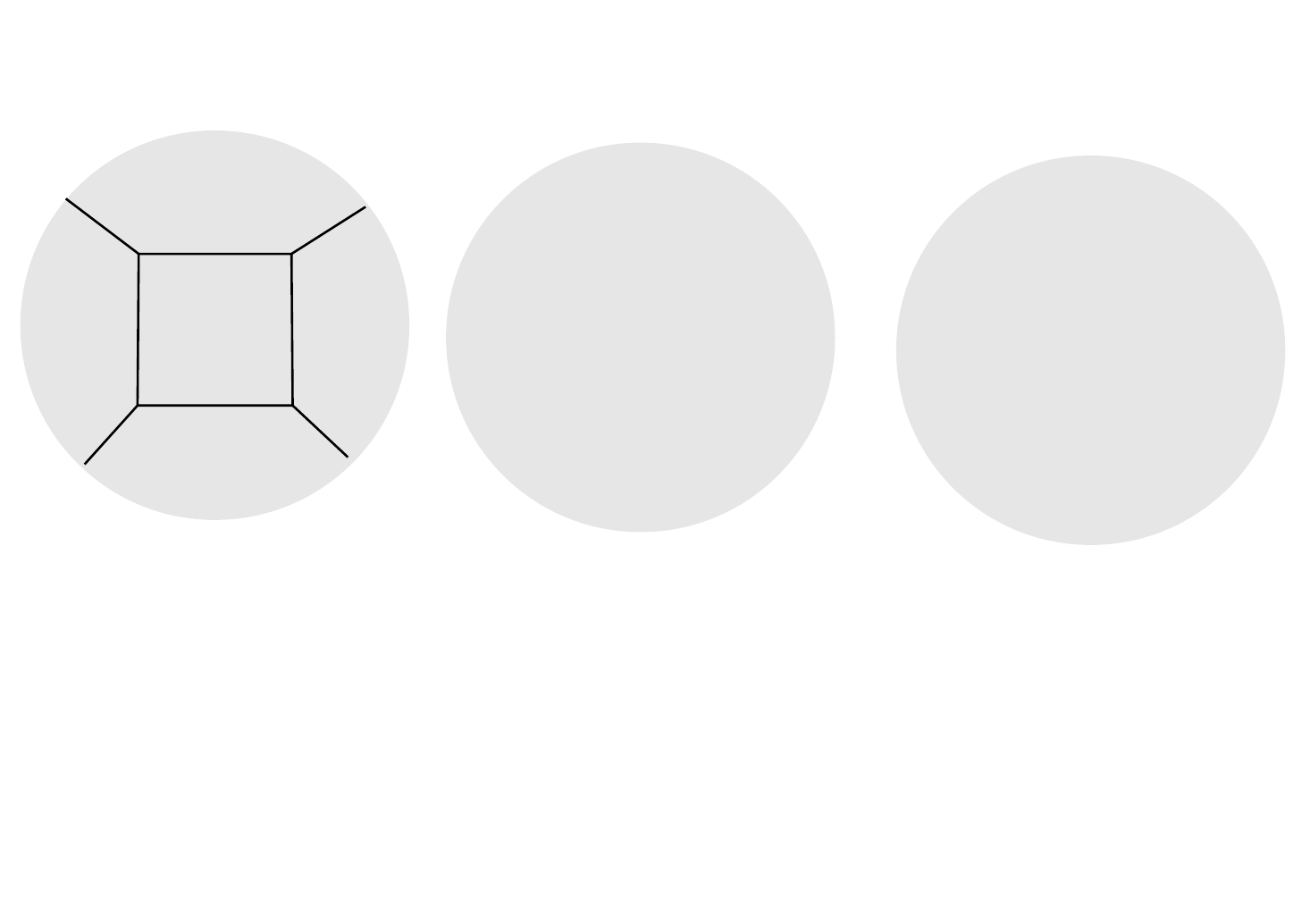
	\caption{The square move} 
	\label{graph3}
\end{figure}

\begin{lemma}(The square relation) \label{l4}
	Let $G$ be a trivalent graph containing a square: 4 edges connecting 4 vertices in a standard disk. Let $G'$ and $G''$ be obtained from $G$ as shown in Figure. \ref{graph3}
	
	Then
	\begin{equation}
		\chi(\mathcal{M}(G))=	\chi(\mathcal{M}(G'))+	\chi(\mathcal{M}(G''))
	\end{equation}
\end{lemma}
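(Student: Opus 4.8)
The plan is to prove the identity fibrewise over the moduli space of the complement of the square. Write $H$ for the trivalent graph-with-legs obtained by deleting the open disk containing the square, so that $H$ carries the four legs $a,b,c,d$ and $G,G',G''$ are all obtained by gluing a tangle into this same $H$. Recording the decorations of the four legs gives a map $\ell:\mathcal{M}(H)\to(\mathbb{CP}^2)^4$, and each of $\mathcal{M}(G),\mathcal{M}(G'),\mathcal{M}(G'')$ is the fibre product of $\ell$ with the local moduli space $\mathcal{F}_G,\mathcal{F}_{G'},\mathcal{F}_{G''}$ of admissible decorations of the tangle alone. Since $\chi$ is additive over (complex algebraic) constructible decompositions and behaves multiplicatively along the fibres of a family (more generally it is computed by the constructible pushforward), it suffices to establish, for every $\mathcal{D}\in\mathcal{M}(H)$ with legs $(a,b,c,d)=\ell(\mathcal{D})$, the pointwise identity
\[
\chi\big(\pi_G^{-1}(\mathcal{D})\big)=\chi\big(\pi_{G'}^{-1}(\mathcal{D})\big)+\chi\big(\pi_{G''}^{-1}(\mathcal{D})\big),
\]
where $\pi$ denotes projection to $\mathcal{M}(H)$; integrating this constructible function over $\mathcal{M}(H)$ then yields the lemma.

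The fibres of the two resolutions are immediate. In $G'$ the two legs on one side are joined into a single edge, and likewise on the other side (say $a$ with $d$ and $b$ with $c$), so $\pi_{G'}^{-1}(\mathcal{D})$ is one point when $a=d$ and $b=c$ and is empty otherwise; hence $\chi(\pi_{G'}^{-1}(\mathcal{D}))=\mathbf{1}[\,a=d,\ b=c\,]$, and symmetrically $\chi(\pi_{G''}^{-1}(\mathcal{D}))=\mathbf{1}[\,a=b,\ c=d\,]$.

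The heart of the argument is $\pi_G^{-1}(\mathcal{D})$. Labelling the inner edges $w,x,y,z$ as in Figure \ref{graph3}, admissibility at the four vertices says exactly that $\{a,w,x\},\{b,w,z\},\{c,z,y\},\{d,x,y\}$ are orthogonal frames; equivalently the inner lines satisfy the cyclic orthogonality $w\perp x\perp y\perp z\perp w$ while the legs are recovered as $a=\langle w,x\rangle^{\perp}$, $b=\langle w,z\rangle^{\perp}$, $c=\langle z,y\rangle^{\perp}$, $d=\langle x,y\rangle^{\perp}$. The key observation, which I would isolate as a dichotomy lemma, is that these relations force $w=y$ or $x=z$: both $w$ and $y$ are orthogonal to $x$ and to $z$, so if $x\neq z$ then both equal the single line $\langle x,z\rangle^{\perp}$, giving $w=y$, and the symmetric statement gives $x=z$ whenever $w\neq y$. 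Thus $\pi_G^{-1}(\mathcal{D})$ is nonempty only in the branch $w=y$ (which forces $a=d,\ b=c$) and the branch $x=z$ (which forces $a=b,\ c=d$). A short computation shows each branch is rigid off the overlap — when $a=d,\ b=c$ but $a\neq b$ one gets $w=\langle a,b\rangle^{\perp}$, then $x,z$ and $y=w$ uniquely, a single point — whereas on the overlap $a=b=c=d$ the two branches coincide and the fibre is the $\mathbb{CP}^1$ of orthogonal frames of $a^{\perp}$, so $\chi=2$.

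Assembling the cases gives $\chi(\pi_G^{-1}(\mathcal{D}))=\mathbf{1}[\,a=d,\ b=c\,]+\mathbf{1}[\,a=b,\ c=d\,]$ in every configuration: it equals $1$ on each degenerate stratum separately and $2=1+1$ on the all-equal stratum where both resolution-fibres are simultaneously nonempty, which is precisely the pointwise identity above. I expect the main obstacle to be the dichotomy lemma together with the careful treatment of the stratum $a=b=c=d$: this is exactly where the fibre of $\pi_G$ jumps from a point to a $\mathbb{CP}^1$ and where the contributions of $G'$ and $G''$ overlap, so the accounting (the jump to $\chi=2$ matching $1+1$, equivalently an inclusion–exclusion over $Q=\mathcal{M}(H)\cap\{a=b=c=d\}$ in the direct decomposition $\mathcal{M}(G)=\{w=y,x\neq z\}\sqcup\{w\neq y,x=z\}\sqcup\{w=y=x=z\}$) must be done with care. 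One should also confirm that the dichotomy genuinely exhausts $\mathcal{F}_G$, so that no further admissible inner configurations are overlooked.
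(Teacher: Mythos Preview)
Your argument is correct and rests on the same local analysis as the paper: the dichotomy $w=y$ or $x=z$, the rigidity of the inner square off the fully degenerate locus, and the $\mathbb{CP}^1$ fibre over $a=b=c=d$. Where you diverge is in the global bookkeeping. The paper maps $\mathcal{M}(G),\mathcal{M}(G'),\mathcal{M}(G'')$ to $(\mathbb{CP}^2)^4$ via the legs, takes the preimages $V,V_1,V_2$ of the diagonal $\{a=b=c=d\}$, chooses open thickenings $\widetilde V,\widetilde V_1,\widetilde V_2$ that deformation retract onto them, and runs a Mayer--Vietoris argument, using that $V$ is a $\mathbb{CP}^1$-bundle over $V_1=V_2$ and that $\mathcal{M}(G)\setminus V$ splits as $(\mathcal{M}(G')\setminus V_1)\sqcup(\mathcal{M}(G'')\setminus V_2)$. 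You instead package the same data as a constructible-function identity on $\mathcal{M}(H)$ and invoke additivity of $\chi$ over algebraic stratifications together with the Fubini property for constructible pushforwards. Your route is cleaner and avoids the choice of tubular neighbourhoods; the paper's route is more elementary in that it only uses Mayer--Vietoris and homotopy equivalence, not the (true but heavier) scissor additivity of compactly supported Euler characteristic for complex varieties. One small slip: in your final decomposition you write $\{w=y=x=z\}$, but since $w\perp x$ these four lines are never all equal; you mean $\{w=y\text{ and }x=z\}$.
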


\begin{proof}
	Let $\mathcal{D}$ be an admissible decoration of $G$ as illustrated in the left of  Figure.\ref{graph3}    . If $x \neq z$, then $w=y$, since they are orthogonal to 2 different lines. In this case we have $a=d,b=c$. Similarly if $w \neq y$, then $x=y, a=b, c=d$.
	
	In the case when $a=b\neq c=d$, $x$ and $z$ is uniquely determined, and $w,y$ is determined afterward. So there is a unique choice for the interior decorations. The same result is true when $a=d \neq b=c$. When $a=b=c=d$, we have a $\mathbb{CP}^1$ of choice of, e.g. $w$, and the decorarions $x,y,z$ is determined thereafter.
	
	Consider the algebraic maps from $\mathcal{M}(G), \mathcal{M}(G'), \mathcal{M}(G)''$ to $(\mathbb{CP}^2)^4$, defined by evaluating at $a,b,c,d$. Write $V, V_1, V_2$ for the preimages of the subvariety $\triangle$ of $(\mathbb{CP}^2)^4$ given by $a=b=c=d$. Since $\mathcal{M}(G),\mathcal{M}(G'),\mathcal{M}(G'')$ are compact real varieties, we can choose a small enough open set $U \subset (\mathbb{CP}^2)^4$ containing $\triangle$, so that its preimages $\widetilde{V}, \widetilde{V_1},\widetilde{V_2}$ in $\mathcal{M}(G),\mathcal{M}(G'),\mathcal{M}(G'')$ are respectively homotopy equivalent to $V, V_1, V_2$.

	We have shown that $V$ is a $\mathbb{CP}^1$- bundle over $V_1=V_2$, therefore 
	\begin{equation}
		\chi (\widetilde{V})=\chi(V) =2 \chi(V_1)= \chi (V_1)+ \chi(V_2)
	\end{equation}
Since $\mathcal{M}(G)\backslash V, \widetilde{V}$ form an open cover of $\mathcal{M}(G)$,using the Mayer-Vietoris sequence we have:
\begin{equation}
	\chi(\mathcal{M}(G))=\chi(\mathcal{M}(G)\backslash V) + \chi (\widetilde{V}) - \chi (\widetilde{V} \backslash V)
	\end{equation}

	Notice that $\mathcal{M}(G) \backslash V $ is the disjoint union of $\mathcal{M}(G_1) \backslash V_1$ and $\mathcal{M}(G_2) \backslash V_2$, and $\widetilde{V} \backslash V$ is the disjoint union of $\widetilde{V_1} \backslash V_1$ and $\widetilde{V_2} \backslash V_2$, we have
	
	\begin{equation}
		\begin{aligned}
				\chi(\mathcal{M}(G))= & \chi({\mathcal{M}(G_1) \backslash V_1}) +\chi(V_1) -\chi({\widetilde{V_1} \backslash V_1}) \\
			& \quad +\chi({\mathcal{M}(G_2) \backslash V_2}) +\chi({V_2})-\chi({\widetilde{V_2} \backslash V_2})
			\\
			=& \chi(\mathcal{M}(G'))+	\chi(\mathcal{M}(G''))
		\end{aligned}
	\end{equation}

\end{proof}

\textbf{Proof of Theorem \ref{thm1}   }
   We can proceed by induction on the number $n$ of edges of $G$. When $n=1$, $G$ is a circle, $\mathcal{M}(G)=\mathbb{CP}^2$, $\chi (\mathcal{M}(G))=3= |Tait(G)|$.  Since a closed bipartite graph contains either a loop, a digon face or a square face, using Lemma  \ref{l1}-\ref{l4}    we can reduce $G$ to graphs with fewer number of edges, for which the inductive hypothesis is true. Notice that properties of $\chi (\mathcal{M}(G))$ from Lemma \ref{l1}- \ref{l4}  are also true for $|Tait(G)|$, the inductive process works.

\section{Relation to representation spaces}

Let
\begin{equation}
	\phi= \begin{pmatrix}
		1 & 0 & 0 \\ 0 & -1 & 0 \\ 0 & 0 & -1
	\end{pmatrix}
\in SU(3)
\end{equation}

Any matrix $A \in SU(3)$ of order 2 is diagonalizable and is conjugate to $\Phi$.

Let $G \subset \mathbb{R}^2 \subset \mathbb{R}^3 $ be a trivalent planar graph.
Choose an orientation on each edge of $G$ so that it induce a meridian around the edge. The meridians generate the fundamental group $\pi_1(G):= \pi_1(\mathbb{R}^3 \backslash G)$ of $G$.

Define $R_{\Phi}(\pi_1(G) ; SU(3))$ to be the subspace of $Hom(\pi_1(G), SU(3))$ consisting of all homomorphisms
\begin{equation}
      \rho :\pi_1(G) \longrightarrow SU(3)
\end{equation}
such that for each meridian $m$, we have that the corresponding $x_m \in \pi_1(G)$ maps to an element $\rho (m)$ that is conjugate to $\Phi$.

\begin{lemma} \label{l5}
	Let $S,T \in SU(3)$ be conjugate to $\Phi$, then $ST$ is conjugate to $\Phi$ if and only if the 1-eigenspaces of $S$ and $T$ are orthogonal.
	
	At this time, the 1-eigenspaces of $ST$
 is orthogonal to that of $S$ and $T$.

\end{lemma}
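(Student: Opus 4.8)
The plan is to reduce everything to a statement about the pair of orthogonal projections onto the $1$-eigenspaces. First I would record the basic characterization: since $\Phi$ has eigenvalues $1,-1,-1$ and $\det\Phi=1$, an element $A\in SU(3)$ is conjugate to $\Phi$ precisely when $A^2=I$ and $A\neq I$. Indeed, $A^2=I$ forces every eigenvalue into $\{\pm 1\}$, and $\det A=1$ forces the number of $-1$'s to be even, hence $0$ (giving $A=I$) or $2$ (giving exactly the eigenvalue list of $\Phi$). Writing $L_S,L_T\subset\mathbb{C}^3$ for the $1$-eigenlines of $S,T$ and $P,Q$ for the orthogonal projections onto them, the fact that $S,T$ are Hermitian involutions with $1$-eigenspaces $L_S,L_T$ (the $-1$-eigenspaces are the orthogonal complements, since conjugates of the Hermitian $\Phi$ are Hermitian) gives $S=2P-I$ and $T=2Q-I$.

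The key step is then purely formal. Because $S$ and $T$ are involutions, $(ST)^{-1}=T^{-1}S^{-1}=TS$, so $(ST)^2=I$ if and only if $ST=TS$, i.e. if and only if $PQ=QP$. Thus, by the first step, $ST$ is conjugate to $\Phi$ exactly when $P$ and $Q$ commute and $ST\neq I$. Next I would analyze commutation of two rank-one orthogonal projections $P=uu^*$, $Q=vv^*$ (with $u,v$ unit vectors spanning $L_S,L_T$): a short computation applying $PQ$ and $QP$ to $v$ shows $PQ=QP$ if and only if $u\perp v$ or $u$ is a scalar multiple of $v$, that is, $L_S\perp L_T$ or $L_S=L_T$. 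Since $ST=I$ is equivalent to $S=T$, hence to $L_S=L_T$, discarding that case leaves precisely $L_S\perp L_T$, which proves the equivalence.

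For the final assertion, I would use the orthogonal case concretely: when $L_S\perp L_T$, choose an orthonormal basis $e_1,e_2,e_3$ with $e_1\in L_S$, $e_2\in L_T$. Then $S=\mathrm{diag}(1,-1,-1)$ and $T=\mathrm{diag}(-1,1,-1)$, so $ST=\mathrm{diag}(-1,-1,1)$; its $1$-eigenspace is $\langle e_3\rangle$, which is orthogonal to both $L_S=\langle e_1\rangle$ and $L_T=\langle e_2\rangle$, as claimed.

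I do not expect a serious obstacle: the only genuine computation is the commutation criterion for two rank-one projections, and even that is a one-line check. The point requiring the most care is the clean characterization in the first step (that within $SU(3)$ the only nontrivial involution type is $\Phi$), since this is what lets me translate ``$ST$ conjugate to $\Phi$'' into the algebraic condition $(ST)^2=I$, $ST\neq I$ rather than computing the full spectrum of $ST$ directly. As a consistency check I would also compute $\operatorname{tr}(ST)=4|u^*v|^2-1$, which equals $\operatorname{tr}\Phi=-1$ exactly when $u\perp v$, matching the conclusion.
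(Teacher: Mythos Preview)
Your argument is correct and complete. The characterization of conjugates of $\Phi$ as nontrivial involutions in $SU(3)$ is sound, the reduction to commutation of the rank-one projections $P,Q$ via $S=2P-I$, $T=2Q-I$ is clean, and the dichotomy $L_S\perp L_T$ or $L_S=L_T$ for commuting rank-one projections is exactly right; discarding $L_S=L_T$ because it gives $ST=I$ finishes the equivalence.

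The paper proceeds differently. For the forward direction it does essentially your final diagonal computation. For the converse it diagonalizes $T$, writes $S=(a_{ij})$ in that basis, and uses the two trace equations $\operatorname{tr}S=-1$ and $\operatorname{tr}(ST)=-1$ to force $a_{11}=-1$; unitarity of the first row then gives $a_{12}=a_{13}=0$, so the $1$-eigenvector of $S$ has vanishing first coordinate and is orthogonal to $L_T$. The orthogonality of $L_{ST}$ to $L_S$ and $L_T$ is then deduced by writing $T=S(ST)$ and $S=(ST)T$ and reapplying the equivalence. Your route is more conceptual: you never touch matrix entries, and the whole ``only if'' direction collapses to the single observation $(ST)^2=I\iff PQ=QP$. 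The paper's route, by contrast, isolates the numerical mechanism (the trace identity), which is essentially your closing consistency check $\operatorname{tr}(ST)=4|u^*v|^2-1$; so the two arguments meet there. Your approach generalizes more readily (it works verbatim in $SU(n)$ for involutions of the same type), while the paper's is self-contained without invoking the classification of involutions in $SU(3)$.
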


\begin{proof}
	Suppose that $e_1, e_2$ are the 1-eigenvector of $S,T$ respectively, and $e_1 \bot e_2$. Choose $e_3$, so that $e_3 \bot e_1, e_2$. Then $Se_2 =-e_2$, $Se_3=-e_3$, $Te_1=-e_1$, $Te_3=-e_3$. Therefore $STe_1=-e_1, STe_2 =-e_2, STe_3=e_3$, which shows that $ST$ is conjugate to $\Phi$.
	
	Conversely, let $ST$ be conjugate to $\Phi$. Let $e_1,e_2,e_3$ be the eigenvectors of $T$, so that under the basis $\{e_1,e_2,e_3\}$, $S,T$ are represented respectively by the matrices
	\begin{equation}
		\begin{pmatrix}
			a_{11} & a_{12} & a_{13} \\
			a_{21} & a_{22} & a_{23} \\
			a_{31} & a_{32} & a_{33} \\
		\end{pmatrix}
	, \quad
	\begin{pmatrix}
		1 & 0 & 0 \\ 0 & -1 & 0 \\ 0 & 0 & -1 \\
	\end{pmatrix}
	\end{equation}
 	
 	Then
 	\begin{equation}
 		ST=\begin{pmatrix}
 			a_{11} &-a_{12}& -a_{13} \\
 			a_{21} &-a_{22}& -a_{23}\\
 			a_{31} &-a_{32}& -a_{33}\\
 		\end{pmatrix}
 	\end{equation}
 is conjugate to $\Phi$,
 $trS=a_{11}+a_{22}+a_{33}=-1$, $tr(ST)=a_{11}-a_{22}-a_{33}=-1$, which implies $a_{11}=-1, a_{22}+a_{33}=0$. Since $S \in SU(3)$, $|a_{11}|^2+|a_{12}|^2+|a_{13}|^2=1$, thus $a_{12}=a_{13}=0$.
 
 Let $xe_1+ye_2+ze_3$ be the eigenvector of $S$ with eigenvector 1. Then
 \begin{equation}
 	\begin{pmatrix}
 		-1 & 0 & 0 \\
 		a_{21} & a_{22} & a_{23} \\
 		a_{31} & a_{32} & a_{33}
 	\end{pmatrix}
 \begin{pmatrix}
 	x \\ y \\ z
 \end{pmatrix}
=
\begin{pmatrix}
	-x \\ \cdots \\ \cdots
\end{pmatrix}
=\begin{pmatrix}
	x \\ y \\ z
\end{pmatrix}
 \end{equation}
 therefore $x=0$, the 1-eigenspace of $S$ is orthogonal to the 1-eigenspace of $T$.
 
 The last argument follows since $T=S^2T=S(ST), S+ST^2=(ST)T$.
\end{proof}

For $\rho \in R_\Phi(\pi_1(G):SU(3))$, we will define a decoration $\mathcal{D}(\rho)$ of $G$ as follows. For each edge $e$ of $G$, there are two meridians along $e$, with reverse orientations. They induce reciprocal elements $x_e, -x_e$ of $\pi_1(G)$, therefore the images $\rho(x_e), \rho(-x_e)$  are inverse to each other. We define the evaluation of $\mathcal{D}(\rho)$ at $e$ to be the 1-eigenspace of $\rho(x_e)$, which does not depend on the choice of the orientation of the edge.

\begin{theorem} \label{thm2}
	The map described above 
	\begin{equation}
		\begin{aligned}
			R_\Phi(\pi_1(G);SU(3)) &\longrightarrow \mathcal{M}(G) \\
			\rho &\longrightarrow  \mathcal{D}(\rho)
		\end{aligned}
	\end{equation}
is a well-defined homeomorphism.
\end{theorem}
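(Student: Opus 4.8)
The plan is to verify well-definedness and bijectivity separately, and then upgrade the resulting bijection to a homeomorphism using compactness.

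\emph{Well-definedness.} First I would recall the structure of $\pi_1(G)=\pi_1(\mathbb{R}^3\setminus G)$. Since $G$ is planar (hence has no crossings), a Wirtinger-type analysis shows that the edge-meridians generate and that a complete set of relations is supplied one per trivalent vertex: near a vertex $v$ the three incident edges $e_1,e_2,e_3$ puncture a small sphere in three points, and the three corresponding meridians bound, giving a relation of the form $x_{e_1}x_{e_2}x_{e_3}=1$ (up to orientation signs and cyclic order). Applying $\rho$, I set $S=\rho(x_{e_1})$, $T=\rho(x_{e_2})$, $U=\rho(x_{e_3})$; these are conjugate to $\Phi$, hence involutions, and satisfy $STU=I$, so $U=(ST)^{-1}$ is conjugate to $\Phi$ exactly when $ST$ is. Lemma \ref{l5} then forces the $1$-eigenspaces of $S$ and $T$ to be orthogonal, and its last assertion makes the $1$-eigenspace of $U$ orthogonal to both. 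Thus at every vertex the three decorating lines $\mathcal{D}(\rho)(e_i)$ are pairwise orthogonal, i.e. $\mathcal{D}(\rho)\in\mathcal{M}(G)$.

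\emph{The inverse map.} Given an admissible decoration assigning a line $\ell_e\subset\mathbb{C}^3$ to each edge $e$, I would send the meridian $x_e$ to the unique involution $R_{\ell_e}:=2P_{\ell_e}-I\in SU(3)$ acting as $+1$ on $\ell_e$ and $-1$ on $\ell_e^{\perp}$ (here $P_{\ell_e}$ is orthogonal projection onto $\ell_e$; note $\det=1$, so $R_{\ell_e}\in SU(3)$ and it is conjugate to $\Phi$). To see this respects the relations, fix a vertex with pairwise-orthogonal lines $\ell_1,\ell_2,\ell_3$: in the orthonormal frame they determine, $R_{\ell_1},R_{\ell_2},R_{\ell_3}$ are simultaneously diagonal, hence commute, and their product is $\mathrm{diag}(1,1,1)=I$. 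Since the vertex relations are the only relations and $R_{\ell_e}$ neither depends on edge orientation nor differs from its inverse, the assignment descends to a homomorphism $\rho_{\mathcal{D}}\in R_\Phi(\pi_1(G);SU(3))$.

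\emph{Homeomorphism.} The two constructions are mutually inverse: any element conjugate to $\Phi$ has eigenvalues $1,-1,-1$ and therefore equals the reflection $R_\ell$ in its own $1$-eigenspace $\ell$, so passing to the decoration and back to a representation returns $\rho$, and conversely. Both maps are continuous — the $1$-eigenspace of a matrix conjugate to $\Phi$ is the simple eigenvalue-$1$ eigenline and varies continuously, while $\ell\mapsto 2P_\ell-I$ is continuous — and the topology on $R_\Phi\subset\mathrm{Hom}(\pi_1(G),SU(3))\subset SU(3)^m$ is the one read off from the images of the meridian generators. Finally, $\mathcal{M}(G)$ is compact (a closed subvariety of $(\mathbb{CP}^2)^m$) and $R_\Phi$ is compact (a closed subset of $SU(3)^m$ cut out by the closed conditions ``conjugate to $\Phi$'' and the vertex relations), so a continuous bijection between them is automatically a homeomorphism.

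\emph{Main obstacle.} I expect the only real subtlety to be justifying the presentation of $\pi_1(\mathbb{R}^3\setminus G)$ — that the meridians generate and that the vertex relations $x_{e_1}x_{e_2}x_{e_3}=1$ form a complete set — together with the orientation bookkeeping at the two ends of each edge. Everything downstream is robust to these choices, however, because each $R_{\ell_e}$ is an involution independent of orientation and the three reflections at a vertex commute; so the precise signs and cyclic orders occurring in the relations never affect the computation $R_{\ell_1}R_{\ell_2}R_{\ell_3}=I$.
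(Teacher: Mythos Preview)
Your proposal is correct and follows essentially the same route as the paper: both use Lemma~\ref{l5} at each vertex to check admissibility of $\mathcal{D}(\rho)$, and both invert by sending a line $\ell$ to the unique $\Phi$-conjugate involution with $1$-eigenspace $\ell$. Your write-up is in fact more complete than the paper's---you make the reflection $R_\ell=2P_\ell-I$ explicit, verify the vertex relations directly, and flag the Wirtinger presentation of $\pi_1(\mathbb{R}^3\setminus G)$ as the point needing justification, whereas the paper leaves all of this implicit; your closing compactness argument is harmless but redundant, since you have already exhibited continuous inverses.
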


\begin{proof}
	  Let $e_1,e_2,e_3$ be adjacent to the same vertex of $G$. Choose any $\rho \in R_\Phi(\pi_1(G)\ ; SU(3))$. By the discussion above, we may assume that $\rho(e_2)\rho(e_1)=\rho(e_3)$. By Lemma \ref{l5}     $\mathcal{D}(\rho)(e_1),\mathcal{D}(\rho)(e_2),\mathcal{D}(\rho)(e_3)$ are orthogonal. Therefore $\mathcal{D}(\rho)$ is admissible.
	  
	  The map is bijective: We can define its inverse
	  \begin{equation}
	  	\begin{aligned}
	  	\mathcal{M}(G) &\longrightarrow	R_\Phi(\pi_1(G);SU(3)) \\
	  	\mathcal{D} &\longrightarrow \rho(\mathcal{D})
	  	\end{aligned}
	  \end{equation}
  such that  $\rho(\mathcal{D})$ is the unique matrix in $SU(3)$ that is conjugate to $\Phi$ and has $e$ as its 1-eigenvector. Obviously the two maps are continuous and are inverse to each other.
\end{proof}

In \cite{MR3880205}, they defined an invariant of trivalent spatial graphs $G$ which takes the form of a $\mathbb{Z}/2$-vector space $J^\#(G)$, using an variant of instanton homology. More precisely, it arises from a Chern-Simons functional whose set of critical points can be identified with the space
\begin{equation}
	\mathcal{R}(K)= \{ \rho : \pi_1(G) \longrightarrow SO(3)| \ \rho(x_e)\text{ has order 2 for each edge }  e\}
\end{equation}
Since a matrix $A \in SU(3)$ is conjugate to $\Phi$ if and only if it has order 2,  Theorem \ref{thm2}   implies that our moduli space $\mathcal{M}(G)$ is homeomorphic to the representation space
\begin{equation}
	\{ \rho : \pi_1(G) \longrightarrow SU(3)| \ \rho(x_e)\text{ has order 2 for each edge }  e\}
\end{equation}

In the definition of $\mathcal{M}(G)$, if we replace $\mathbb{CP}^2$ by $\mathbb{RP}^2$, then we get a moduli space $\widetilde{\mathcal{M}}(G)$, which is, by iterating the argument of this section, homeomorphic to 
$\mathcal{R}(K)$.

\textbf{Question}: What is the Euler characteristic of  $\widetilde{\mathcal{M}}(G)$?

	\bibliography{tcms}
	\nocite{*}
\end{document}